\theoremstyle{definition} 
\newtheorem{thm}{Theorem} 
\newtheorem{ex}{Example} 
\newtheorem{defn}{Definition} 
\newtheorem{remark}[thm]{Remark}
\newtheorem{lem}[thm]{Lemma}
\title{Efficient Computation of Laplace Residual Power Series with Explicit Coefficient Formulas}
\author{
 Pisamai Kittipoom \\
  Department of Mathematics, Faculty of Science\\
  Prince of Songkla University\\
  Hat Yai Songkhla Thailand, 90110 \\
  \texttt{pisamai.k@psu.ac.th} \\
  %% examples of more authors
  %% \AND
  %% Coauthor \\
  %% Affiliation \\
  %% Address \\
  %% \texttt{email} \\
  %% \And
  %% Coauthor \\
  %% Affiliation \\
  %% Address \\
  %% \texttt{email} \\
  %% \And
  %% Coauthor \\
  %% Affiliation \\
  %% Address \\
  %% \texttt{email} \\
}
\begin{document}
\maketitle
\begin{abstract}
The Residual Power Series Method (RPSM) provides a powerful framework for solving fractional differential equations. However, a significant computational bottleneck arises from the necessity of calculating the fractional derivatives of the residual function within the coefficient determination process. The Laplace Residual Power Series Method (LRPSM) partially addresses this by employing the Laplace transform. However, it introduces additional complexities and requires the computation of the residual error function at each iteration.
This work presents a novel approach that directly derives explicit formulas for the coefficients, Bypassing the need for iterative residual error function calculations. This advancement significantly enhances the computational efficiency of the method compared to both RPS method and LRPS method. \end{abstract}

% keywords can be removed
%\keywords{First keyword \and Second keyword \and More}

\section{Introduction}\label{sec1}
Fractional differential equations have emerged as powerful tools for modeling complex physical phenomena. Due to the inherent nonlinearities of many real-world problems, exact solutions for these fractional equations are often elusive. Consequently, researchers have devoted significant effort to developing both analytical and numerical methods to approximate these solutions.
 Established techniques include homotophy perturbation \cite{homo}, cubic B-spline collocation method \cite{bsp}, Adomian decomposition method \cite{adom} and the residual power series method \cite{RPSM2014, RPSM2015,RPSM2016,RPSM2018}.

The Residual Power Series Method (RPSM) was first introduced by Abu Arqub \cite{RPSM2014} for finding approximate solutions to fractional differential equations. Unlike traditional methods, RPS method directly constructs approximate solutions as fractional power series, avoiding linearization, perturbation, and discretization techniques.
By using the residual error concept, The RPS method shows the effective for computing the coefficients of the power series. The effectiveness of the RPS method is supported by various research publications  \cite{RPSM2014,RPSM2015,RPSM2016,RPSM2018}. Shortly thereafter, Eriqat et al. \cite{lpsm2020New} combined the Laplace transform method with the RPS method and called the Laplace Residue Power Series Method (LRPSM). This method presents the solution of the equation in Laplace domain and uses the RPS method with the solution in Laplace domain. Unlike the RPS method, the LRPS method utilizes the limit in the Laplace domain of the residual error with a power term to compute the coefficients for the series solution. The LRPS approach offers a significant advantage over the RPS method by requiring simpler calculations. 

However, the key method for calculating coefficients in the LRPS method suffers from two drawbacks. Firstly, it can be cumbersome due to the repetitive need for residual error calculations at each step of the solution process. Secondly, 
a key limitation of the LRPS Method lies in its reliance on the infinite limit to determine series solution coefficients. This approach presents significant computational challenges and raises concerns about the guaranteed existence of such a limit. In contrast, our proposed method bypasses this limitation by deriving an explicit formula for the coefficients. This not only eliminates the computational burden associated with infinite limits but also offers a more robust and theoretically sound framework for solving fractional differential equations using the LRPS method.

This present article is structured as follows. Section 2 establishes the necessary background by providing key definitions and properties of fractional derivatives and Laplace transforms. Section 3 then introduces the LRPS method framework, while Section 4 highlights the key advantage of our proposed method: the straightforward calculation of coefficients. Finally, Section 5 demonstrates the efficacy of LRPS method using three illustrative examples.

\section{Preliminaries}\label{sec2}

\begin{defn}
The Caputo's time-fractional derivative of order $\alpha$ of $\psi(x,t)$ is defined as
\[ D_t^{\alpha}\psi(x,t) = I_t^{n-\alpha}\partial_t^{n} \psi(x,t), \quad n-1 < \alpha \le n, x \in I, t > \tau > 0,\]
where $I_t^{\beta}$ is the time-fractional Riemann-Liouville integral operator of order $\beta$ defined as
\[
 I_t^{\beta} \psi(x,t) = \begin{cases} \frac{1}{\Gamma(\beta)} \int_0^t (t-\tau)^{\beta-1} \psi(x, \tau)\,d\tau, &\quad \beta > 0, t > \tau \ge 0 \\ \psi(x,t),&\quad \beta = 0 
\end{cases}. 
\]
\end{defn}
In the following lemmas we review some properties of fractional derivative used throughout this paper. For more detail, we refer the reader to Ref. \cite{book1, book2, book3, book4}].
\begin{lem} \label{lem-fd}
For $n-1< \alpha \le n, \gamma > -1$ and $t \ge 0$, we have
\begin{enumerate}[(1)]
\item $D_t^{\alpha} c = 0, \quad c \in \mathbb{R}$.
\item $D_t^{\alpha} t^{\gamma} = \frac{\Gamma(\gamma+1)}{\Gamma(\gamma+1-\alpha)}\,t^{\gamma-\alpha}$.
\item $D_t^{m\alpha} t^{k\alpha} = \begin{cases} 0,&\quad k<m \\ \Gamma(k\alpha+1),&\quad k=m \\ \frac{\Gamma(k\alpha+1)}{\Gamma((k-m)\alpha+1)} t^{(k-m)\alpha},& \quad k > m \end{cases},$\qquad
where\quad $D_t^{m\alpha}=\underbrace{D_t^{\alpha}D_t^{\alpha}\dots D_t^{\alpha}}_{m\; \text{times}}.$
%\item $D_t^{m\alpha} \left( c_0+ \sum\limits_{m=1}^{L} c_m \,t^{k\alpha} \right)\Big|_{t=0} = \begin{cases} 0,&\quad n > 2L \\ \Gamma(k\alpha+1) ,&\quad k=m \\ \frac{\Gamma(k\alpha+1)}{\Gamma((k-m)\alpha+1)} t^{(k-m)\alpha},& \quad k > m \end{cases},$\qquad
%where\quad $D_t^{m\alpha}=\underbrace{D_t^{\alpha}D_t^{\alpha}\dots D_t^{\alpha}}_{m\; \text{times}}.$
\end{enumerate}
\end{lem}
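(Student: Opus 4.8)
The statement to prove is Lemma~\ref{lem-fd}, which collects three elementary properties of the Caputo fractional derivative. My plan is to derive everything directly from the definition $D_t^{\alpha}\psi = I_t^{n-\alpha}\partial_t^{n}\psi$ together with the integral formula for the Riemann--Liouville operator $I_t^{\beta}$, handling the three items in order since item (3) is built by iterating item (2).

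\textbf{Item (1).} For a constant $c$, I would note that $\partial_t^{n} c = 0$ for every $n \ge 1$, and since $n-1 < \alpha \le n$ forces $n \ge 1$, we get $D_t^{\alpha} c = I_t^{n-\alpha} 0 = 0$. This is immediate from linearity of the integral operator and needs no computation.

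\textbf{Item (2).} This is the computational core. I would start from $\partial_t^{n} t^{\gamma} = \frac{\Gamma(\gamma+1)}{\Gamma(\gamma+1-n)} t^{\gamma-n}$, valid for $\gamma > -1$ (interpreting via the Gamma function so it also covers non-integer $\gamma$), and then apply $I_t^{n-\alpha}$. The key sub-step is the formula $I_t^{\beta} t^{\mu} = \frac{\Gamma(\mu+1)}{\Gamma(\mu+\beta+1)} t^{\mu+\beta}$, which follows from the substitution $\tau = t s$ in $\frac{1}{\Gamma(\beta)}\int_0^t (t-\tau)^{\beta-1}\tau^{\mu}\,d\tau$ and recognizing the Beta integral $B(\beta,\mu+1) = \frac{\Gamma(\beta)\Gamma(\mu+1)}{\Gamma(\beta+\mu+1)}$. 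Taking $\beta = n-\alpha$ and $\mu = \gamma-n$, the Gamma factors telescope: $\frac{\Gamma(\gamma+1)}{\Gamma(\gamma+1-n)} \cdot \frac{\Gamma(\gamma-n+1)}{\Gamma(\gamma-\alpha+1)} = \frac{\Gamma(\gamma+1)}{\Gamma(\gamma-\alpha+1)}$, and the power becomes $t^{\gamma-n+(n-\alpha)} = t^{\gamma-\alpha}$, giving exactly the claimed expression.

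\textbf{Item (3).} I would prove this by induction on $m$, repeatedly applying item (2) with $\gamma = k\alpha$ at each stage. The base case $m=1$ splits into $k\alpha < \alpha$ versus $k\alpha \ge \alpha$; one checks that when $k$ is a positive integer with $k \ge 1$ the factor $\frac{\Gamma(k\alpha+1)}{\Gamma((k-1)\alpha+1)}$ and power $t^{(k-1)\alpha}$ appear, while the degenerate subcases ($k<m$, and $k=m$ where the power vanishes leaving the pure constant $\Gamma(k\alpha+1)$) are read off directly. For the inductive step, applying one more $D_t^{\alpha}$ to $D_t^{(m-1)\alpha} t^{k\alpha}$ and using item (2) again produces the telescoping product $\frac{\Gamma(k\alpha+1)}{\Gamma((k-m)\alpha+1)} t^{(k-m)\alpha}$ for $k > m$, collapses to $\Gamma(k\alpha+1)$ when $k=m$, and gives $0$ when $k<m$ (here one must be slightly careful: once a constant is reached at some intermediate stage, item (1) kills it).

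\textbf{Main obstacle.} The only genuinely delicate point is bookkeeping in item (3): making sure the case analysis ($k<m$, $k=m$, $k>m$) is handled consistently through the induction, and in particular that the ``constant then derivative is zero'' mechanism in item (1) is invoked at the right moment rather than blindly propagating the Gamma-ratio formula (which would involve $\Gamma$ of a nonpositive argument). Everything else reduces to the single Beta-function identity and careful tracking of Gamma arguments.
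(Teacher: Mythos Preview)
Your proposal is correct and follows essentially the same route as the paper: parts (1) and (2) are standard (the paper simply cites references, whereas you supply the Beta-integral computation), and part (3) is handled by induction on $m$ using (2) for the generic step and (1) to annihilate the constant once it appears, exactly as the paper does. The only cosmetic difference is that the paper organizes the three cases $k=m$, $k<m$, $k>m$ separately rather than bundling them into a single induction, but the underlying mechanism is identical.
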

\begin{proof}
The proof of part (1) and (2) are in Refs \cite{book1, book2, book3, book4}
\\
To prove part (3), if $k=m$, it follows from part (2) of the lemma that   $D_t^{k\alpha}t^{k\alpha} = \Gamma(k\alpha+1)$. 
For fixed $k \in \mathbb{N}$ and $k<m$, if $m=1$ then $D_t^{\alpha} t^0 = 0$. For $m=2$, $D_t^{2\alpha}t^{\alpha} = D_t^\alpha(D^\alpha t^{\alpha}) = D_t^{\alpha} (\Gamma(\alpha+1)) = 0$. By mathematical induction, assume that $D_t^{n\alpha}t^{k\alpha} = 0$ for $m=n$. Then
\[ D_t^{(n+1)\alpha} t^{k\alpha} = D_t\left( D_t^{n\alpha}t^{k\alpha} \right) = 0. \]
Hence, $D_t^{m\alpha} t^{k\alpha} = 0$ for all $k<m$. Finally, for the case $k>m$, we refer to part (2) of the lemma. 
\end{proof}
\begin{defn}
Let $\psi(x,t)$ be a piecewise continuous function on $I \times [0,\infty)$ and of exponential order $\delta$. Then the Laplace transform of $\psi(x,t)$ in time domain is defined as 
\[ \Psi(x,s) =\mathcal{L}_t \{ \psi(x,t) \} =  \int_0^{\infty} \psi(x,t) e^{-st}\,dt,\quad s > \delta,\]
while the inverse Laplace transform of the function $\Psi(x,s)$ is defined as
\[ \psi(x,t)  =\mathcal{L}^{-1}_s \{ \Psi(x,s) \} =  \int_{Re(s)-i\infty}^{Re(s)+i\infty} \Psi(x,s) e^{st}\,ds,\quad Re(s) > \delta_0,  \]
where the Laplace transform converges absolutely at the point $\delta_0$ lying in the the right half plane.
\end{defn}
\begin{lem} (\cite{lrpsm2021soliton})\label{lem-prelim} 
 Let $\psi(x,t)$ be a piecewise continuous function on $I \times [0, \infty)$, $|\psi(x,t)| \le Me^{\delta t}$ for some $\delta, M>0$ and $\Psi(x,s)=\mathcal{L}\{ \psi(x,t) \}$. Then
\begin{enumerate}[(1)]
\item $\lim\limits_{s \to \infty} s \Psi(x,s) = \psi(x,0), \quad x \in I$
\item $\mathcal{L}\{ D^{\alpha}_t \psi(x,t)\} = s^\alpha \Psi(x,s) - \sum\limits_{k=0}^{m-1} s^{\alpha - k - 1} D^{k}_t \psi(x,t) \, \Big|_{t=0}$ \quad $m-1 < \alpha < m$.
\item $\mathcal{L}_t \{ D_t^{n\alpha}\psi(x,t) \} = s^{n\alpha} \Psi(x,s) - \sum\limits_{k=0}^{n-1}s^{(n-k)\alpha-1}D_t^{k\alpha}\psi(x,t) \, \Big|_{t=0},\quad 0< \alpha <1$ \\
where $D_t^{n\alpha}=\underbrace{D_t^{\alpha}D_t^{\alpha}\dots D_t^{\alpha}}_{n\; \text{times}}.$
\end{enumerate}
\end{lem}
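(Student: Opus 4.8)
The plan is to treat the three parts in order, deriving part (3) from part (2) by induction and part (2) from the classical integer-order differentiation rule together with the convolution theorem for the Riemann--Liouville integral. Throughout I would carry the standing regularity assumption that the relevant derivatives of $\psi$ (integer-order $\partial_t^k\psi$ for part (2), and the iterates $D_t^{k\alpha}\psi$ for part (3)) are themselves piecewise continuous and of exponential order, so that each transform involved exists and the manipulations below are legitimate.

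For part (1), I would substitute $u = st$ in the defining integral to obtain $s\Psi(x,s) = \int_0^{\infty} \psi(x, u/s)\, e^{-u}\, du$. The exponential-order bound gives $|\psi(x,u/s)\, e^{-u}| \le M e^{(\delta/s - 1)u}$, which for all $s \ge 2\delta$ is dominated by the fixed integrable function $M e^{-u/2}$; and since $\psi(x,\cdot)$ is right-continuous at $0$, $\psi(x,u/s) \to \psi(x,0)$ pointwise as $s \to \infty$. The dominated convergence theorem then yields $\lim_{s\to\infty} s\Psi(x,s) = \psi(x,0)\int_0^{\infty} e^{-u}\,du = \psi(x,0)$. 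The only delicate point is the interchange of limit and integral, and the exponential-order hypothesis is precisely what supplies the dominating function.

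For part (2), recall from the definition of the Caputo derivative that $D_t^{\alpha}\psi = I_t^{m-\alpha}\partial_t^{m}\psi$, where $m-1 < \alpha < m$. Writing $I_t^{m-\alpha}$ as convolution against $t^{m-\alpha-1}/\Gamma(m-\alpha)$ and using $\mathcal{L}_t\{t^{\beta-1}/\Gamma(\beta)\} = s^{-\beta}$ with the convolution theorem gives $\mathcal{L}_t\{D_t^{\alpha}\psi\} = s^{\alpha-m}\,\mathcal{L}_t\{\partial_t^{m}\psi\}$. I would then invoke, or quickly establish by induction via integration by parts, the classical identity $\mathcal{L}_t\{\partial_t^{m}\psi\} = s^{m}\Psi(x,s) - \sum_{k=0}^{m-1} s^{m-1-k}\,\partial_t^{k}\psi(x,t)\big|_{t=0}$. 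Multiplying through by $s^{\alpha-m}$ collapses $s^{\alpha-m}\cdot s^{m}$ to $s^{\alpha}$ and $s^{\alpha-m}\cdot s^{m-1-k}$ to $s^{\alpha-k-1}$, which after identifying $\partial_t^{k} = D_t^{k}$ is exactly the claimed formula.

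For part (3), specialize part (2) to $0 < \alpha < 1$, i.e.\ $m = 1$, to get the one-step rule $\mathcal{L}_t\{D_t^{\alpha}\phi\} = s^{\alpha}\Phi(x,s) - s^{\alpha-1}\phi(x,0)$ for any admissible $\phi$; this is the base case $n=1$. For the inductive step, use that $D_t^{(n+1)\alpha}$ is by definition $D_t^{\alpha}$ applied to $D_t^{n\alpha}\psi$: apply the one-step rule with $\phi = D_t^{n\alpha}\psi$ and substitute the induction hypothesis for $\mathcal{L}_t\{D_t^{n\alpha}\psi\}$. Distributing $s^{\alpha}$ over the hypothesis raises every exponent $(n-k)\alpha - 1$ to $(n+1-k)\alpha - 1$, while the newly generated term $-s^{\alpha-1}D_t^{n\alpha}\psi(x,0)$ is precisely the missing $k=n$ summand (since $\alpha - 1 = (n+1-n)\alpha - 1$); collecting terms gives the formula with $n$ replaced by $n+1$. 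The main obstacle, as noted, is purely one of bookkeeping: one must know that each iterate $D_t^{k\alpha}\psi$ inherits piecewise continuity and exponential order so that part (2) may be applied at every stage, and I would record this as a hypothesis rather than attempt to verify it in general.
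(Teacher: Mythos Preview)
Your proof is correct and complete. Note, however, that the paper does not actually prove this lemma: it is stated with a citation to \cite{lrpsm2021soliton} and no argument is given. So there is no ``paper's own proof'' to compare against; you have supplied what the paper omits.

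Your approach is the standard one. Part (1) via the substitution $u=st$ and dominated convergence is the textbook route to the initial-value theorem; part (2) is the usual derivation combining the convolution representation of $I_t^{m-\alpha}$ with the classical integer-order differentiation rule; and part (3) is exactly the induction one expects, peeling off one $D_t^{\alpha}$ at a time and appending the new boundary term as the $k=n$ summand. Your explicit flagging of the implicit regularity hypotheses (that each $D_t^{k\alpha}\psi$ be of exponential order so the transforms exist) is appropriate and is a point the cited literature typically glosses over as well.
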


\section{Constructing the LRPS solution for time-fractional differential equations} \label{sec3}
Consider the following class of time-fractional differential equation.
\begin{align} \label{high}
D_t^{m\alpha} \psi(x,t) &= \mathcal{R}[\psi(x,t)],  
\end{align}
subject to the initial conditions
\begin{equation} \label{ics}
\begin{aligned} \psi(x,0) = f_0(x), D_t^{\alpha} \psi(x,0) = f_1(x), \dots, D_t^{(m-1)\alpha} \psi(x,0) = f_{m-1}(x),   \end{aligned}
\end{equation}
where $0< \alpha \le 1$ and $\mathcal{R}[\psi(x,t)]$ is a right had side (RHS) of the equation.  

In many studies, common forms of $\mathcal{R}[\psi(x,t)]$ include integer power functions or their derivatives with respect to the spatial variable $x$. Consequently, we will assume $\mathcal{R}[\psi(x,t)]$ represents an operator expressed as:  
\begin{align} \label{nonli}
\mathcal{R}[\psi(x,t)] &= \sum_{m=1}^M\sum_{n=0}^{N} c_{m}(x) \psi^{m}(x,t) D_x^{n} \psi(x,t) + h(x,t), 
\end{align} 
where $c_n(x)$ is a coefficient function of variable $x$ and $h(x,t)$ is a nonhomogeneous function. 
The RHS term $\mathcal{R}[\psi(x,t)]$ is illustrated with examples in Table  \ref{tabu}.

\begin{table}[ht!]
\centering
\begin{tabular}{l|l}
\hline
 \qquad \qquad time-fractional equations:  & \qquad \qquad $D_t^{k\alpha } u  = \mathcal{R}[u] $ \\ \hline  
 Newell-Whitehead-Segel equation \cite{LRPSM2021}  &  $D^{\alpha} u(x,t) = u_{xx}+2u-3u^2$ 
 \\ \hline
 Burger's equation\cite{LRPSM2021} & $D^{\alpha} u(x,t) = u_{xx}-uu_x$ \\ 
 \hline
 Biological population diffusion equation \cite{LRPSM2024}  & $ D^{\alpha} u(x,y,t) = (u^2)_{xx}+(u^2)_{yy}+u(1-ru)$
   \\
  \hline
  Korteweg-de-Varies KdV equation \cite{homo2016} & $D^{\alpha} u(x,t) = - \alpha u u_{x} - \beta u_{xxx}$ 
   \\
  \hline
  Fitzhugh-Nagumo equation \cite{LRPSM2023Nagamu} & $D^{\alpha} \psi(x,t) =  [\psi]_{xx} + (1+\beta) \psi^2 + \beta \psi+ \psi^3$
  \\
  \hline
 Kolmogorov equation \cite{LRPSM2022Kolmo}& $D^{\alpha} \psi(x,t) =  (x+1)\psi_x + x^2e^t \psi_{xx}$
  \\
  \hline 
Klein-Gordon equation \cite{lrpsm2021soliton}  & $ D^{2\alpha} \psi(x,t) = \nu [\psi^2]_{xx} - \omega  [\psi^2]_{xxxx}$
\\
\hline
Burgers equation with proportional delay \cite{homdelay} & 
$D_t^{\alpha}u(x,t) = u_{xx}(x,t) + u_x\left( x, \frac{t}{2}\right)u\left( \frac{x}{2}, \frac{t}{2}\right) + \frac{1}{2}u(x,t)$
\\
\hline
 Boussinesq-Burger equations \cite{LRPSM2022} 
 & $\begin{aligned} D_t^{\alpha}u(x,t) &=-w_x - uu_x \\ D_t^{\alpha}w(x,t) &=-[uw]_x-u_{xxx}  \end{aligned}$
\\
\hline
\end{tabular}
\caption{Example of Time-Fractional DEs \label{tabu}}
\end{table}
Assume that $\psi(x,t)$ in Eq.(\ref{high}) exhibits an exponential dependence on $t$ of order $\delta$. By operating Laplace transform with respect to $t$ on Eq.(\ref{high}), we can write it in the following form
\begin{align} \label{lapHig}
\Psi(x,s) &=  \sum_{n=0}^{m-1} \frac{f_n(x)}{s^{1+n\alpha}}+\frac{1}{s^{m\alpha}} \mathcal{L} \{  \mathcal{R} [\mathcal{L}^{-1}\{ \Psi(x,s)\}]\},\quad s> \delta.
\end{align}
The function $\Psi(x,s)$ in Eq. (\ref{lapHig}) can be considered as an exact solution of (\ref{high}) in the Laplace space.

The LRPS method construct a series solution to (\ref{high}) as a fractional power series of the variable $t$ about the initial point $t=0$:
\begin{equation}\label{phi-1}
 \psi(x,t) = \sum_{n=0}^\infty \frac{\phi_n(x)}{\Gamma(n\alpha + 1)} t^{n\alpha}
\end{equation}
where $0< \alpha \le 1$ and $t > 0$.
\\
By applying the Laplace transform with respect to $t$ to the fractional power series in Eq. (\ref{phi-1}), we obtain the approximate series solution $\Psi(x,s)$ in the Laplace space as follow
\begin{align}\label{phi-2}
\Psi(x,s) 
&= \sum_{n=0}^\infty \frac{\phi_n(x)}{s^{n\alpha+1}},\quad s > \delta. 
\end{align} 

In the following lemma, we presents the coefficient  $\phi_n(x)$  in the series solutions (\ref{phi-1}) or (\ref{phi-2}).

\begin{lem} (\cite{lrpsm2021soliton}) \label{lem-coef}
Let $\psi(x,t)$ be a continuous function in $I \times [0, \infty)$ and $|\psi(x,t)| \le Me^{\delta t}$ for some $\delta, M>0$. If $\Psi(x,s) = \mathcal{L} \{ \psi(x,t)\}$ and
\[
\Psi(x,s) = \sum_{n=0}^\infty \frac{\phi_n(x)}{s^{n\alpha +1}},\quad 0< \alpha \le 1, x \in I, s > \delta, 
\]
then \quad $\phi_n(x) = D^{n\alpha}_t \psi(x,t)\,\Big|_{t=0}.$
\end{lem}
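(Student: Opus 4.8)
The plan is to extract $\phi_n(x)$ by isolating the relevant term in the Laplace-domain series and then applying the known limit relation in Lemma~\ref{lem-prelim}(1). Concretely, start from the hypothesis
\[
\Psi(x,s) = \sum_{k=0}^{\infty} \frac{\phi_k(x)}{s^{k\alpha+1}},
\]
and multiply through by $s^{n\alpha+1}$ so that the $n$-th term becomes $\phi_n(x)$, the terms with $k<n$ carry positive powers of $s$, and the terms with $k>n$ carry negative powers of $s$. The idea is that, after subtracting the ``lower-order'' partial sum, the quantity $s\bigl(s^{n\alpha}\Psi(x,s) - \sum_{k=0}^{n-1}\phi_k(x)s^{(n-k)\alpha}\bigr)$ should tend to $\phi_n(x)$ as $s\to\infty$. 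To make this rigorous I would proceed by induction on $n$.

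\textbf{Base case.} For $n=0$, Lemma~\ref{lem-prelim}(1) gives $\lim_{s\to\infty} s\Psi(x,s) = \psi(x,0)$; on the other hand, multiplying the series by $s$ and letting $s\to\infty$ kills every term except $k=0$, yielding $\phi_0(x)$. Hence $\phi_0(x) = \psi(x,0) = D_t^{0\cdot\alpha}\psi(x,t)\big|_{t=0}$, which is the claim for $n=0$.

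\textbf{Inductive step.} Assume $\phi_k(x) = D_t^{k\alpha}\psi(x,t)\big|_{t=0}$ for all $k \le n-1$. Apply Lemma~\ref{lem-prelim}(3) to $D_t^{n\alpha}\psi$: its Laplace transform is $s^{n\alpha}\Psi(x,s) - \sum_{k=0}^{n-1} s^{(n-k)\alpha-1} D_t^{k\alpha}\psi(x,t)\big|_{t=0}$. Now feed this transformed function into Lemma~\ref{lem-prelim}(1): multiplying by $s$ and taking $s\to\infty$ produces $D_t^{n\alpha}\psi(x,t)\big|_{t=0}$ on the one hand. On the other hand, substitute the series for $\Psi(x,s)$ and the inductive hypothesis $D_t^{k\alpha}\psi(x,0)=\phi_k(x)$; the expression becomes
\[
s\!\left(\sum_{k=0}^{\infty}\frac{\phi_k(x)}{s^{(k-n)\alpha+1}} - \sum_{k=0}^{n-1}\phi_k(x)s^{(n-k)\alpha-1}\right)
= \phi_n(x) + \sum_{k=n+1}^{\infty}\frac{\phi_k(x)}{s^{(k-n)\alpha}},
\]
where the first sum's $k=0,\dots,n-1$ terms cancel exactly against the subtracted finite sum and the $k=n$ term gives $\phi_n(x)$. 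Letting $s\to\infty$, the remaining tail vanishes (each exponent $(k-n)\alpha>0$), so the limit is $\phi_n(x)$. Equating the two evaluations of the same limit gives $\phi_n(x) = D_t^{n\alpha}\psi(x,t)\big|_{t=0}$, completing the induction.

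The main obstacle is justifying the interchange of the limit $s\to\infty$ with the infinite summation for the tail $\sum_{k>n}\phi_k(x)s^{-(k-n)\alpha}$ — i.e.\ showing this tail genuinely tends to $0$ rather than merely being formally small. This needs the exponential-order bound $|\psi(x,t)|\le Me^{\delta t}$ together with convergence of the series representation for $s>\delta$: one argues that for $s$ bounded away from $\delta$ the tail is dominated by a convergent series whose terms are uniformly controlled, so dominated convergence (or a direct geometric-type estimate on $\sum_{k>n}|\phi_k(x)|s^{-(k-n)\alpha}$) applies. I would also note in passing that the hypotheses of Lemma~\ref{lem-prelim} must be checked for the function $D_t^{n\alpha}\psi$, not just $\psi$; one typically assumes the requisite smoothness and exponential order of the fractional derivatives as part of the standing regularity hypotheses, and I would state this explicitly.
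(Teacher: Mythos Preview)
The paper does not supply its own proof of this lemma: it is quoted verbatim from \cite{lrpsm2021soliton} and used as a black box, so there is nothing in the present article to compare your argument against line by line.

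That said, your argument is the natural one and is essentially how this result is established in the source literature. The induction via Lemma~\ref{lem-prelim}(3) followed by Lemma~\ref{lem-prelim}(1) is exactly the right mechanism: peel off the first $n$ coefficients using the Laplace transform of $D_t^{n\alpha}\psi$, then read off $\phi_n$ from the initial-value limit. Your algebraic cancellation is correct, and you have also correctly flagged the two genuine technical points that the cited references tend to gloss over: (i) the termwise passage to the limit in the tail $\sum_{k>n}\phi_k(x)s^{-(k-n)\alpha}$, and (ii) the need for $D_t^{n\alpha}\psi$ itself to satisfy the exponential-order hypothesis of Lemma~\ref{lem-prelim}(1). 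Neither point is addressed in the present paper (nor, typically, with full rigor in the LRPS literature); treating them as standing regularity assumptions, as you propose, is the standard practice. So your proposal is sound and matches what the cited reference does.
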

Lemma \ref{lem-coef} allows us to determine the first $m-1$ coefficients, related to the initial conditions (\ref{ics}): 
\[  
\phi_0(x) = \psi(x,0)=f_0(x),\; \phi_1(x) = D_t^{\alpha}\psi(x,0) = f_1(x), \dots, \phi_{m-1}(x) = D_t^{(m-1)\alpha}\psi(x,0) = f_{m-1}(x). 
\] 
Then Eq. (\ref{phi-2}) becomes
\begin{equation}\label{phi-2a}
\Psi(x,s) = \sum_{n=0}^{m-1} \frac{f_n(x)}{s^{1+n\alpha}}+\sum_{n=m}^{\infty} \frac{\phi_n(x)}{s^{1+n\alpha}}.
\end{equation}
For remaining coefficients $\phi_i(x), i=m, m+1, \dots$ of the series solution (\ref{phi-2a}), the LRPS method provides a framework to derive these coefficients. 

Let $\Psi_k(x,s)$, denoted as the $k-$approximate series solution, represent the $k-$th truncated series of $\Psi(x,s)$.
\begin{equation} \label{k-approx}
\Psi_k(x,s) = \sum_{n=0}^{m-1} \frac{f_n(x)}{s^{1+n\alpha}}+\sum_{n=m}^k \frac{\phi_n(x)}{s^{1+n\alpha}}.
\end{equation} 
Similar to the RPS method, we define the Laplace residual function $LRes(x,s)$ by the \textit{error} of the approximate series solution (\ref{phi-2}) and the exact solution (\ref{lapHig}) as follow:
\begin{equation} \label{resi-1}
  LRes(x,s) = \Psi(x,s) - \sum_{n=0}^{m-1} \frac{f_n(x)}{s^{1+n\alpha}} - \frac{1}{s^{m\alpha}} \mathcal{L} \{  \mathcal{R} [\mathcal{L}^{-1}\{ \Psi(x,s)\}]\}.
\end{equation}
The residual function for the $k-$th iteration or the $k-$th residual function $LRes_k(x,s)$ can be written as
\begin{align}
LRes_k(x,s) &= \Psi_k(x,s) - \sum_{n=0}^{m-1} \frac{f_n(x)}{s^{1+n\alpha}} - \frac{1}{s^{m\alpha}} \mathcal{L} \{  \mathcal{R} [\mathcal{L}^{-1}\{ \Psi_k(x,s)\}]\}. \nonumber
\\
&= \sum_{n=m}^k \frac{\phi_n(x)}{s^{1+n\alpha}}- \frac{1}{s^{2\alpha}} \mathcal{L} \left\{  \mathcal{R} \left[\mathcal{L}^{-1} \left\{ \sum_{n=0}^{m-1} \frac{f_n(x)}{s^{1+n\alpha}}+\sum_{n=m}^k \frac{\phi_n(x)}{s^{1+n\alpha}} \right\} \right] \right\},\quad k \in \mathbb{N}, \label{lr-high}
\end{align}
where $\Psi_k(x,s)$ is obtained from the truncated series (\ref{k-approx}). 
\\
To determine the coefficients $\phi_n(x)$, we use the $k-$th residual function (\ref{lr-high}) and results from the following theorem.
\begin{thm} (\cite{lrpsm2021soliton}) \label{thm-res}
Let $\psi(x,t)$ be a continuous function on $I \times [0, \infty)$ and $|\psi(x,t)| \le Me^{\delta t}$ for some $\delta, M>0$. If $\Psi(x,s) = \mathcal{L} \{ \psi(x,t)\}$ is represented in (\ref{phi-2}) and there exists a function $K$ such that $\left| s\mathcal{L}\left\{ D_t^{(n+1)\alpha} \psi(x,t)\right\} \right| \le K(x) $ on $I \times (\delta, \nu]$ for $0< \alpha \le 1$ then the error term $R_n(x,s)$ of (\ref{phi-2}) is bounded above:
\begin{equation} \label{reminder}
|R_n(x,s)| \le \frac{K(x)}{s^{(n+1)\alpha+1}},\quad x \in I, \delta< s \le \nu
\end{equation}
\end{thm}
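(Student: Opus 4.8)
The plan is to reduce the claimed bound on the truncation error $R_n(x,s)$ of the Laplace-space expansion (\ref{phi-2}) to the hypothesised bound on $s\,\mathcal{L}\{D_t^{(n+1)\alpha}\psi(x,t)\}$, by showing that these two quantities differ only by the explicit power $s^{(n+1)\alpha+1}$. First I would pin down the meaning of the remainder: since $\Psi(x,s) = \sum_{k=0}^\infty \phi_k(x)/s^{k\alpha+1}$, the error after retaining the terms with $k=0,\dots,n$ is
\[
R_n(x,s) = \Psi(x,s) - \sum_{k=0}^{n} \frac{\phi_k(x)}{s^{k\alpha+1}},
\]
and by Lemma \ref{lem-coef} each coefficient is $\phi_k(x) = D_t^{k\alpha}\psi(x,t)\big|_{t=0}$.

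Next I would apply Lemma \ref{lem-prelim}(3) with $n$ replaced by $n+1$ to rewrite the Laplace transform of the higher fractional derivative in terms of $\Psi$ and the already-identified coefficients:
\[
\mathcal{L}\{D_t^{(n+1)\alpha}\psi(x,t)\} = s^{(n+1)\alpha}\Psi(x,s) - \sum_{k=0}^{n} s^{(n+1-k)\alpha-1}\,\phi_k(x).
\]
Multiplying through by $s$ gives
\[
s\,\mathcal{L}\{D_t^{(n+1)\alpha}\psi(x,t)\} = s^{(n+1)\alpha+1}\Psi(x,s) - \sum_{k=0}^{n} s^{(n+1-k)\alpha}\,\phi_k(x).
\]
On the other hand, multiplying the displayed expression for $R_n(x,s)$ by $s^{(n+1)\alpha+1}$ produces exactly the same right-hand side, because $s^{(n+1)\alpha+1}/s^{k\alpha+1} = s^{(n+1-k)\alpha}$. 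Therefore
\[
s^{(n+1)\alpha+1}\,R_n(x,s) = s\,\mathcal{L}\{D_t^{(n+1)\alpha}\psi(x,t)\}.
\]

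Finally I would invoke the hypothesis $\bigl| s\,\mathcal{L}\{D_t^{(n+1)\alpha}\psi(x,t)\} \bigr| \le K(x)$ valid on $I \times (\delta,\nu]$, so that $\bigl| s^{(n+1)\alpha+1} R_n(x,s) \bigr| \le K(x)$ on that set, and then divide by the positive quantity $s^{(n+1)\alpha+1}$ to obtain (\ref{reminder}). I expect the only genuinely delicate point to be the index bookkeeping when specialising Lemma \ref{lem-prelim}(3): one must check that the finite sum indeed runs over $k=0,\dots,n$ and that the exponents $(n+1-k)\alpha$ line up after clearing denominators, so that the two expressions cancel term by term. Beyond that, the argument tacitly uses that the series (\ref{phi-2}) converges (so that $R_n$ is well defined and the termwise manipulations are legitimate) and, for the boundary case $\alpha = 1$, the classical $n$-fold version of part (3) of Lemma \ref{lem-prelim}, both of which are standard.
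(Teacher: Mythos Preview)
Your argument is correct: the identity $s^{(n+1)\alpha+1}R_n(x,s)=s\,\mathcal{L}\{D_t^{(n+1)\alpha}\psi\}$ obtained from Lemma~\ref{lem-prelim}(3) and Lemma~\ref{lem-coef} is exactly what reduces the remainder bound to the hypothesis, and the index check you flag is the only place one can slip. Note, however, that the paper does not supply its own proof of this theorem---it merely quotes the result from \cite{lrpsm2021soliton}---so there is no in-paper argument to compare against; your proof is the standard one that the cited reference gives.
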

\begin{remark}  \label{remark}
It follows by (\ref{reminder}) that for each $n \in \mathbb{N}$
\[
0 \le s^{n\alpha +1} |R_n(x,s)| \le \frac{K(x)}{s^{\alpha}}. 
\]
Therefore
\[
 \lim_{s \to \infty} s^{n\alpha +1} R_n(x,s) = 0. 
\]
It is equivalent to
\begin{equation} \label{rem}
 \lim_{s \to \infty} s^{k\alpha +1} LRes_k(x,s) = 0,\quad k = 1,2, \dots . 
\end{equation} 
We remark that this limit will play a crucial role in determining the coefficients $\phi_k(x)$.
\end{remark}

\section{Computing LRPS method Coefficients: Formulas and Procedures} 

This section discusses a difficulty in applying the LRPS method to solve time-fractional differential equations Eq. (\ref{high}). Although this method employs the $k-$th residual functions in Eq. (\ref{lr-high}) and coefficients obtained from limit analysis in Eq. (\ref{rem}), computing these coefficients often presents practical difficulties due to their mathematical complexity. Simplifying these calculations remains an under-explored area in existing research.

This study offers a significant contribution by streamlining the coefficient calculation process for the LRPS method applied to nonlinear time-fractional differential equations.  We achieve this by deriving explicit coefficient formulas.  This significantly enhances the efficiency and applicability of the LRPS method for constructing solutions to this class of equations.

 \subsection{LRPS method for the time-fractional differential equation of order (0, 1]} \label{subsec4-1}
Consider the time-fractional differential equation (\ref{high}) when $m=1$:
\begin{align} \label{lr-1}
D_t^{\alpha} \psi(x,t) &= \mathcal{R}[\psi(x,t)],\quad 0< \alpha \le 1,
\end{align}
subject to the initial condition $\psi(x,0)=f_0(x)$ and $\mathcal{R}[\psi(x,t)]$ represents an operator expressed in (\ref{nonli})
\\
We present a methodological framework for determining the LRPS coefficients, as detailed in the following steps.
\\
\textbf{Step 1} \; Find $\phi_1(x)$. 
By placing $m=1$ and $k=1$ into the residual function (\ref{lr-high}), we get
\begin{align*}
 LRes_1(x,s) 
 &= \frac{1}{s^{1+\alpha}}\phi_1(x) - \frac{1}{s^{\alpha}}\mathcal{L}_{t} \left\{ \mathcal{R} \left[\mathcal{L}_t^{-1} \left\{\frac{1}{s} f_0(x)+\frac{1}{s^{1+\alpha}} \phi_1(x) \right\} \right] \right\} \\
&=  \frac{1}{s^{1+\alpha}}\phi_1(x) - \frac{1}{s^{\alpha}}\mathcal{L}_{t} \left\{ \mathcal{R} \left[ f_0(x) +\frac{t^{\alpha}}{\Gamma(\alpha+1)} \phi_1(x)  \right] \right\}. 
\end{align*} 
By multiplying both sides of the above equation by $s^{1+\alpha}$ and applying limit as $s$ tends to infinity, we obtain
\begin{align*}
\lim_{s \to \infty} s^{1+\alpha} LRes_1(x,s) &=  \phi_1(x) - \lim_{s \to \infty} s \mathcal{L}_{t} \left\{ \mathcal{R} \left[ f_0(x)+\frac{t^{\alpha}}{\Gamma(\alpha+1)} \phi_1(x)  \right] \right\}   
\end{align*}
It follows from Eq. (\ref{rem}) that the left hand limit equals zero and we obtain
\[ 
\phi_1(x)  =  \lim_{s \to \infty} s \mathcal{L}_{t} \left\{ \mathcal{R} \left[ f_0(x)+\frac{t^{\alpha}}{\Gamma(\alpha+1)} \phi_1(x)  \right] \right\}  \]
Using part (1) of Lemma \ref{lem-prelim}, we obtain the reduced coefficient:
\begin{equation}\label{phi}
\phi_1(x) =  \mathcal{R} \left[ f_0(x)+\frac{t^{\alpha}}{\Gamma(\alpha+1)} \phi_1(x)  \right] \Bigg|_{t=0} = \mathcal{R}[f_0(x)]
\end{equation}
\allowdisplaybreaks
\\
\textbf{Step 2} \;  Find $\phi_2(x)$. By substituting $m=1$ and $k=2$ into the residual function (\ref{lr-high}), we obtain
\begin{align*}
 LRes_2(x,s)
  &= \frac{1}{s^{1+\alpha}}\phi_1(x) +\frac{1}{s^{1+2\alpha}} \phi_2(x)  \\
  &\qquad - 
  \frac{1}{s^{\alpha}}\mathcal{L}_{t} \left\{ \mathcal{R} \left[\mathcal{L}_t^{-1} 
  \left\{ \frac{1}{s} f_0(x)+\frac{1}{s^{1+\alpha}} \phi_1(x) +\frac{1}{s^{1+2\alpha}} \phi_2(x) \right\} \right] \right\} \\
  &= \frac{1}{s^{1+\alpha}}\phi_1(x) +\frac{1}{s^{1+2\alpha}} \phi_2(x)  \\
  &\qquad - 
  \frac{1}{s^{\alpha}}\mathcal{L}_{t} \left\{ \mathcal{R} \left[ f_0(x)+\frac{t^{\alpha}}{\Gamma(1+\alpha)} \phi_1(x) 
  +\frac{t^{2\alpha}}{\Gamma(1+2\alpha)} \phi_2(x)  \right] \right\}.   
\end{align*}
Multiply the above equation by $s^{1+2\alpha}$ and take limit as $s$ tends to infinity,
\begin{align} \label{phii2-0}
\lim_{s \to \infty} s^{1+2\alpha} LRes_2(x,s) &= \phi_2(x) + \lim_{s \to \infty} \Big( s^{\alpha} \phi_1(x) - s^{\alpha+1} \mathcal{L}_{t} \Big\{ \mathcal{R} \Big[ f_0(x)  +\frac{t^{\alpha}}{\Gamma(1+\alpha)} \phi_1(x) 
  +\frac{t^{2\alpha}}{\Gamma(1+2\alpha)} \phi_2(x)  \Big] \Big\} \Big).
\end{align}
Traditionally, the LRPS approach was used the limit in  Eq. (\ref{phii2-0}) to solve for the coefficient  $\phi_2(x)$. However, this method involved complex calculations and there was uncertainty about whether this limit exists or not (there is a term in $\mathcal{L}_t\{\mathcal{R}\}$ that could eliminate an unwanted term $s^{\alpha}\phi_1(x)$). This uncertainty is the main reason why we aim to simplify this limit. 
\\
In the same manner as previous step, by utilizing Eq. (\ref{rem}), we obtain
\begin{align}
\phi_2(x) &= \lim_{s \to \infty} \left(  s^{\alpha+1} \mathcal{L}_{t} \left\{ \mathcal{R} \left[ f_0(x) +\frac{t^{\alpha}}{\Gamma(1+\alpha)} \phi_1(x) 
  +\frac{t^{2\alpha}}{\Gamma(1+2\alpha)} \phi_2(x)  \right]\right\} - s^{\alpha}\phi_1(x)  \right) \nonumber \\
  &= \lim_{s \to \infty} s\left(  s^{\alpha} \mathcal{L}_{t} \left\{ \mathcal{R} \left[ f_0(x) +\frac{t^{\alpha}}{\Gamma(1+\alpha)} \phi_1(x) 
  +\frac{t^{2\alpha}}{\Gamma(1+2\alpha)} \phi_2(x)  \right]\right\} - s^{\alpha-1}\phi_1(x)  \right).  \label{phii2-1}
\end{align}
Now we will simplify this limit by observing the term $\mathcal{R}$ in Eq. (\ref{phii2-1}) at the initial point $t=0$. Based on the choice of $\mathcal{R}$ in Eq. (\ref{nonli}), we get
\begin{align*}
\mathcal{R} \left[ f_0(x)+\frac{t^{\alpha}}{\Gamma(1+\alpha)} \phi_1(x) 
  +\frac{t^{2\alpha}}{\Gamma(1+2\alpha)} \phi_2(x)  \right] \Bigg|_{t=0} &= \mathcal{R}  [f_0(x)] = \phi_1(x) .
\end{align*}  
According to part (2) in Lemma \ref{lem-prelim}, Eq. (\ref{phii2-1}) can be written as
\begin{align*}
\phi_2(x) &= \lim_{s \to \infty} s \mathcal{L}_{t} \left\{ D_t^{\alpha} \left( \mathcal{R} \left[ f_0(x)+\frac{t^{\alpha}}{\Gamma(1+\alpha)} \phi_1(x) 
  +\frac{t^{2\alpha}}{\Gamma(1+2\alpha)} \phi_2(x)  \right]\right)   \right\} .
\end{align*}
Now, to simplify the coefficient $\phi_2$ in the above equation, we use part (1) in Lemma \ref{lem-prelim}:
\begin{align} \label{explain-1}
\phi_2(x) &= D_t^{\alpha} \left( \mathcal{R} \left[ f_0(x)+\frac{t^{\alpha}}{\Gamma(1+\alpha)} \phi_1(x) 
  +\frac{t^{2\alpha}}{\Gamma(1+2\alpha)} \phi_2(x)  \right]\right) \Bigg|_{t=0}.
\end{align}
However, the coefficient $\phi_2$ in Eq. (\ref{explain-1}) can be further simplified based on the choice of $\mathcal{R}$ in Eq. (\ref{nonli}) and by utilizing part (3) of Lemma \ref{lem-fd}, we get
\[ D_t^{\alpha} \left( \mathcal{R}\left[ f(x)+\frac{t^{\alpha}}{\Gamma(1+\alpha)}\phi_1(x) + \frac{t^{2\alpha}}{\Gamma(1+2\alpha)}\phi_2(x) \right] \right) \Bigg|_{t=0} = D_t^{\alpha} \left( \mathcal{R}\left[ f(x)+\frac{t^{\alpha}}{\Gamma(1+\alpha)} \phi_1(x) \right] \right) \Bigg|_{t=0}. \]
Therefore, Eq. (\ref{explain-1}) can be reduced to  
\begin{align} \label{phii2-new}
\phi_2(x)   &= D_t^{\alpha} \left( \mathcal{R} \left[ f_0(x) +\frac{t^{\alpha} \phi_1(x)}{\Gamma(1+\alpha)}   \right]\right) \Bigg|_{t=0}.
\end{align}
In earlier works on LRPS method, finding the coefficients involved a repetitive process of calculating the limit at infinity of the residual function at each step. In the following step, we will derive a general formula that can be applied to all coefficients at once, eliminating the need for repeated calculations.
\\
\textbf{Step 3}\; Using mathematical induction, we will prove that
\begin{align} \label{formu}
\phi_k(x)
&= D_t^{(k-1)\alpha} \left( \mathcal{R} \left[ f_0(x) +\sum_{m=1}^{k-1} \frac{t^{m\alpha}\phi_m(x)}{\Gamma(1+m\alpha)} \right] \right) \Bigg|_{t=0}, \quad \text{for}\; k = 2,3,\dots 
\end{align}
is an explicit formula for determining the coefficients of the series solution (\ref{phi-2}).
\begin{proof}
Base case established in Step 2.  \\
To proceed with the inductive step, assume the following formulas hold true for the first $k-1$ coefficients:
\begin{equation} \label{induc}
 \phi_n(x)
= D_t^{(n-1)\alpha} \left( \mathcal{R} \left[ f_0(x) +\sum_{m=1}^{n-1} \frac{t^{m\alpha}\phi_m(x)}{\Gamma(1+m\alpha)} \right] \right) \Bigg|_{t=0},\qquad n=2,3,\dots,k.
\end{equation}
With the assumption that $\mathcal{R}$ is defined as in Eq. (\ref{nonli}), the previous coefficient can be evaluated at the initial time $t=0$ as
\begin{align}
 \phi_n(x)
= D_t^{(n-1)\alpha} \left( \mathcal{R} \left[ f_0(x) +\sum_{m=1}^{n+1} \frac{t^{m\alpha}\phi_m(x)}{\Gamma(1+m\alpha)} \right] \right) \Bigg|_{t=0},\qquad n=2,3,\dots,k. \label{induc-2}
\end{align}
Next we need to prove that the formula for the coefficient $\phi_{k+1}$ is true. 
\\
Similar to previous step, by calculating the $(k+1)-$th residual function and compute its limit, we obtain that  
\begin{align}
\phi_{k+1}(x)  &= \lim_{s \to \infty} s\left(  s^{k\alpha} \mathcal{L}_{t} \left\{ \mathcal{R} \left[ f_0(x) + \sum_{m=1}^{k+1}\frac{t^{\alpha m}}{\Gamma(1+m\alpha)} \phi_m(x) \right]\right\}  -  \sum_{m=1}^{k-1} s^{(k-m)\alpha -1}\phi_{m+1}(x)    \right)   \label{induc-3}
\end{align}
By placing coefficients $\phi_i, i=2,3,\dots,k$ in Eq. (\ref{induc-2}) into Eq. (\ref{induc-3}) and applying part (3) in Lemma \ref{lem-prelim}, we obtain
\begin{align*}
\phi_{k+1}(x) &= \lim_{s \to \infty} s \mathcal{L}_{t} \left\{ D_t^{k\alpha} \mathcal{R} \left[ f_0(x) +\sum_{m=1}^{k+1} \frac{t^{m\alpha}\phi_m(x)}{\Gamma(1+m\alpha)} \right] \right\}.
\end{align*}
Based on part (1) in Lemma \ref{lem-prelim}, we have
\begin{align*}
\phi_{k+1}(x) = D_t^{k\alpha} \left( \mathcal{R} \left[ f_0(x)+\sum_{m=1}^{k+1} \frac{t^{m\alpha}\phi_m(x)}{\Gamma(1+m\alpha)} \right] \right) \Bigg|_{t=0}.
\end{align*}
Thus, the proof is complete.
\end{proof}
\textbf{Step 4}\; As the final step, the series solution of Eq.(\ref{lr-1}) in The Laplace domain can be concluded by
\begin{align} \label{eq-sol-s}
\Psi(x,s) &= \frac{f_0(x)}{s} + \sum_{n=1}^\infty \frac{\phi_n(x)}{s^{\alpha n + 1}},
\end{align} 
where $\phi_n(x)$ is given by (\ref{formu}).
\\
Finally, by applying the inverse Laplace transform to Eq. (\ref{eq-sol-s}). This solution in time domain is expressed in the following form
\begin{align}
\psi(x,t) &= f_0(x) + \sum_{n=1}^\infty \frac{\phi_{n}(x)}{\Gamma(1+n\alpha)} \, t^{n\alpha}.
\end{align}
\begin{remark}
Assuming the term $\mathcal{R}$ on the RHS of Eq. (~\ref{lr-1}) is linear, specifically expressible as $\mathcal{R}[u]=L[u]$ where $L$ denotes a linear operator, the coefficient formula in Eq. (\ref{formu}) simplifies to
\begin{align*}
\phi_k(x)
&= D_t^{(k-1)\alpha} \left(  L \left[ f_0(x) + \sum_{n=1}^{k-1}  \frac{\phi_{n}(x)\,t^{n\alpha}}{\Gamma(1+n\alpha)}  \right]  \right) \Bigg|_{t=0}, \quad \text{for}\; k = 2, 3, \dots. 
    \end{align*}
Utilizing part (3) in Lemma \ref{lem-prelim}, we obtain the coefficients formula for a time-fractional linear DEs as follow
\begin{align}\label{lin-co}
    \phi_k(x) &= L[\phi_{k-1}(x)]  \Big|_{t=0},\quad k=2, 3, \dots.
\end{align}
\end{remark}
\subsection{LRPS method for the generalized time-fractional differential equations}
Consider the generalized time-fractional differential equations of the following form:
\begin{align} \label{highh}
D^{m\alpha} \psi(x,t) &= \mathcal{R}[\psi(x,t)], \quad 0< \alpha \le 1, m\in \mathbb{Z}^+,
\end{align}
subject to the initial conditions (\ref{ics}) and $\mathcal{R}[\psi(x,t)]$ denotes an operator defined in equation (\ref{nonli}).

Based on the initial conditions (\ref{ics}), we obtain the first $m-1$ coefficients for the series solution (\ref{phi-2a}). Therefore, we aim to find the subsequent coefficients $\phi_i(x),\;i=m, m+1, \dots$
\\
\textbf{Step 1}\;  Find $\phi_m(x)$, by using (\ref{lr-high}) when $k=m$, we obtain
 \begin{align*}
 LRes_m(x,s) &= \frac{\phi_m(x)}{s^{1+m\alpha}}-\frac{1}{s^{m\alpha}} \mathcal{L} \left\{  \mathcal{R} \left[ \mathcal{L}^{-1} \left\{ \sum_{n=0}^{m-1} \frac{f_n(x)}{s^{1+n\alpha}}+  \frac{\phi_m(x)}{s^{1+m\alpha}} \right\} \right] \right\} \\
 &= \frac{\phi_m(x)}{s^{1+m\alpha}}-\frac{1}{s^{m\alpha}} \mathcal{L} \left\{  \mathcal{R} \left[\sum_{n=0}^{m-1} \frac{f_n(x)\,t^{n\alpha}}{\Gamma(1+n\alpha)} + \frac{\phi_m(x) \,t^{m\alpha}}{\Gamma(1+m\alpha)} \right] \right\}
 \end{align*}
After multiplying the above equation by $s^{1+m\alpha}$ and take limit as $s$ tends to infinity, yield
\begin{align*}
\phi_m(x)&= \lim_{s \to \infty} s \mathcal{L} \left\{ \mathcal{R} \left[\sum_{n=0}^{m-1} \frac{f_n(x)\,t^{n\alpha}}{\Gamma(1+n\alpha)} + \frac{\phi_m(x) \,t^{m\alpha}}{\Gamma(1+m\alpha)} \right]\right\}.
\end{align*}
By employing part (1) of Lemma \ref{lem-prelim} and based on the choice of $\mathcal{R}$ as in Eq. (\ref{nonli}), we get
\begin{align}\label{phi2-hig}
\phi_m(x) &= \mathcal{R} \left[ \sum_{n=0}^{m-1} \frac{f_n(x)\,t^{n\alpha}}{\Gamma(1+n\alpha)} + \frac{\phi_m(x) \,t^{m\alpha}}{\Gamma(1+m\alpha)}  \right] \Bigg|_{t=0} = \mathcal{R}[f_0(x)].
\end{align}
\\
\textbf{Step 2}\; Find $\phi_{m+1}(x)$, by substituting $k=m+1$  into (\ref{lr-high}). We obtain
\begin{align*}
 LRes_{m+1}(x,s) &= \frac{\phi_m(x)}{s^{1+m\alpha}} +\frac{\phi_{m+1}(x)}{s^{1+(m+1)\alpha}}-\frac{1}{s^{m\alpha}} \mathcal{L} \left\{  \mathcal{R} \left[ \mathcal{L}^{-1} \left\{ \sum_{n=0}^{m-1} \frac{f_n(x)}{s^{1+n\alpha}} + \frac{\phi_m(x)}{s^{1+m\alpha}} + \frac{\phi_{m+1}(x)}{s^{1+(m+1)\alpha}} \right\} \right] \right\} \\
 &= \frac{\phi_m(x)}{s^{1+m\alpha}} +\frac{\phi_{m+1}(x)}{s^{1+(m+1)\alpha}}-\frac{1}{s^{m\alpha}} \mathcal{L} \left\{  \mathcal{R} \left[  \sum_{n=0}^{m-1} \frac{f_n(x)\,t^{n\alpha}}{\Gamma(1+n\alpha)} + \frac{\phi_m(x)\, t^{m\alpha}}{\Gamma(1+m\alpha)} + \frac{\phi_{m+1}(x)\,t^{(m+1)\alpha}}{\Gamma(1+(m+1)\alpha)}  \right] \right\}.
 \end{align*}
Multiplying the previous equation by $s^{1+(m+1)\alpha}$ and taking limit as $s$ approaches infinity to get
\begin{align*}
\phi_{m+1}(x) &= \lim_{s \to \infty} \left[ s^{1+\alpha} \mathcal{L} \left\{ \mathcal{R}\left[ \sum_{n=0}^{m-1} \frac{f_n(x)\,t^{n\alpha}}{\Gamma(1+n\alpha)} + \frac{\phi_m(x)\, t^{m\alpha}}{\Gamma(1+m\alpha)} + \frac{\phi_{m+1}(x)\,t^{(m+1)\alpha}}{\Gamma(1+(m+1)\alpha)}  \right] \right\} - s^\alpha \phi_m(x) \right] 
\\
&= \lim_{s \to \infty} s\left[ s^{\alpha} \mathcal{L} \left\{ \mathcal{R}\left[ \sum_{n=0}^{m-1} \frac{f_n(x)\,t^{n\alpha}}{\Gamma(1+n\alpha)} + \frac{\phi_m(x)\, t^{m\alpha}}{\Gamma(1+m\alpha)} + \frac{\phi_{m+1}(x)\,t^{(m+1)\alpha}}{\Gamma(1+(m+1)\alpha)}  \right] \right\} - s^{\alpha-1} \phi_m(x) \right].
\end{align*}
Since $\phi_m(x) = \mathcal{R}[f_0(x)] = \mathcal{R}\left[ f_0(x)+\sum\limits_{n=1}^{m-1} \frac{f_n(x)\,t^{n\alpha}}{\Gamma(1+n\alpha)} + \frac{\phi_m(x)\, t^{m\alpha}}{\Gamma(1+m\alpha)} + \frac{\phi_{m+1}(x)\,t^{(m+1)\alpha}}{\Gamma(1+(m+1)\alpha)} \right] \Bigg|_{t=0}$ and based on part (3) in Lemma \ref{lem-prelim}, we obtain that
\begin{align*}
\phi_{m+1}(x)
&= \lim_{s \to \infty} s \mathcal{L} \left\{ D_t^{\alpha} \left( \mathcal{R}\left[f_0(x)+\sum\limits_{n=1}^{m-1} \frac{f_n(x)\,t^{n\alpha}}{\Gamma(1+n\alpha)} + \frac{\phi_m(x)\, t^{m\alpha}}{\Gamma(1+m\alpha)} + \frac{\phi_{m+1}(x)\,t^{(m+1)\alpha}}{\Gamma(1+(m+1)\alpha)} \right] \right) \right\} .
\end{align*}
Based on part (1) in Lemma \ref{lem-prelim}, we get
\begin{align*}
\phi_{m+1}(x) &= D_t^{\alpha} \left( \mathcal{R}\left[ f_0(x)+\sum\limits_{n=1}^{m-1} \frac{f_n(x)\,t^{n\alpha}}{\Gamma(1+n\alpha)} + \frac{\phi_m(x)\, t^{m\alpha}}{\Gamma(1+m\alpha)} + \frac{\phi_{m+1}(x)\,t^{(m+1)\alpha}}{\Gamma(1+(m+1)\alpha)}  \right] \right) \Bigg|_{t=0}
\\
&=  D_t^{\alpha} \left( \mathcal{R}\left[ f_0(x)+\frac{f_1(x)\,t^{\alpha}}{\Gamma(1+\alpha)} + \sum\limits_{n=2}^{m-1} \frac{f_n(x)\,t^{n\alpha}}{\Gamma(1+n\alpha)} + \frac{\phi_m(x)\, t^{m\alpha}}{\Gamma(1+m\alpha)} + \frac{\phi_{m+1}(x)\,t^{(m+1)\alpha}}{\Gamma(1+(m+1)\alpha)}  \right] \right) \Bigg|_{t=0}
\end{align*}
Under the assumption of $\mathcal{R}$ in Eq. (\ref{nonli}) and using part(3) in Lemma \ref{lem-fd}, the coefficient $\phi_{m+1}(x)$ can be written as
\begin{align} \label{phi_2}
\phi_{m+1}(x) &= D_t^{\alpha} \left( \mathcal{R}\left[ f_0(x) +\frac{f_1(x)\,t^\alpha}{\Gamma(1+\alpha)} \right] \right) \Bigg|_{t=0}.
\end{align}
\\
\textbf{Step 3}\; Find $\phi_k(x),\;k=m+2, m+3, \dots$. We can skip the inductive step for brevity, as it follows the same approach presented in Step 3 of Section 4.1. Then the coefficients formula for the LRPS solution is
\begin{align} \label{phi-k-2}
\phi_k(x) &= D_t^{(k-m)\alpha} \left( \mathcal{R} \left[ \sum_{n=0}^{k-m}  \frac{\phi_{n}(x)\,t^{n\alpha}}{\Gamma(1+n\alpha)} \right] \right) \Bigg|_{t=0},\quad k=m+2, m+3, \dots,
\end{align}
where $\phi_i(x)=f_i(x), i=0,1,2,\dots, m-1$ is obtained from the initial conditions (\ref{ics}).
\section{Illustrative example}
This section highlights the effectiveness and ease of use of our explicit LRPS coefficients formula. To illustrate the applicability of our results, we consider three well-established examples: the time-fractional Kolmogorov equation, the time-fractional Klein-Gordon equation, and the time-fractional generalized Burgers equation with proportional delay. In contrast to the traditional LRPS approach, our method employs the explicit coefficients formula to construct the approximate solutions for these problems.

\begin{ex} \label{ex-1}
Consider time-fractional Kolmogorov equation of order $0< \alpha \le 1$ \cite{LRPSM2022Kolmo} 
\begin{align}
D^{\alpha} \psi(x,t) =  (x+1)\psi_x + x^2e^t \psi_{xx},\qquad \psi(x,0) = x+1.
\end{align}
The RHS of this equation exhibits linearity with variable coefficients. In particular, it takes the form: $L[\psi] = (x+1)\psi_x + x^2e^t \psi_{xx}. $
\\
By using the coefficient formula (\ref{lin-co}) for linear time-fractional equation in Remark \ref{remark}, we obtain the following coefficients:
\begin{itemize}
\item $\phi_1(x) = L[\psi(x,0)] = L[x+1] = x+1$
\item $\phi_2(x) = L\left[ (x+1)  \right]  \Big|_{t=0}.$ 
\item For $k=3, 4,\dots$, it is easy to see that the remaining coefficients 
\begin{align*}
\phi_k(x) &= x+1.
\end{align*}
\end{itemize}
The coefficients obtained here agree with those found in \cite{LRPSM2022Kolmo}.
\\
Finally, substitute theses coefficients into the $k-$th approximate series solution, we obtain the final approximate solution:
\begin{align}
 \psi_k(x,t) &=  \sum_{n=1}^k \frac{(x+1) \,t^{n\alpha}}{\Gamma(1+n\alpha)}.  
\end{align}
\end{ex}
\begin{ex} \label{ex-2}
Consider the time-fractional Klien-Gordon equation \cite{lrpsm2021soliton}
\begin{align*}
D_t^{2\alpha} \psi(x,t) = \nu [\psi^2(x,t)]_{xx} - \omega [\psi^2(x,t)]_{xxxx},
\end{align*}
subject to the initial conditions
\begin{equation} \label{ex-ics}
\begin{aligned}
 \psi(x,0) &= \frac{2\lambda^2}{3\nu} \left( 1 -\cosh\left(\sqrt{\frac{\nu}{\omega}} \frac{x}{2}\right) \right) \\
 D_t^\alpha \psi(x,0) &= \frac{\lambda^3}{3\sqrt{\nu \omega}}\sinh \left(\sqrt{\frac{\nu}{\omega}} \frac{x}{2} \right),
\end{aligned}
\end{equation}
where $0 < \alpha \le 1$, $\nu, \omega > 0$ and $\lambda \in \mathbb{R}$.
\\
The nonlinear RHS term is 
\[ \mathcal{R}[\psi] = \nu [\psi^2]_{xx}- \omega [\psi^2]_{xxxx}.\]
\begin{itemize}
\item By using the formula for the coefficient (\ref{phi2-hig}) for $\phi_2$, we have
\begin{align*}
\phi_2(x)&= \mathcal{R} \left[ \psi(x,0) \right] \\
&= \mathcal{R} \left[ \frac{2\lambda^2}{3\nu} \left( 1 -\cosh\left(\sqrt{\frac{\nu}{\omega}} \frac{x}{2}\right) \right) \right]  \\
&= \nu\left( \frac{4\lambda^4}{9\nu^2} \left( 1 -\cosh\left(\sqrt{\frac{\nu}{\omega}} \frac{x}{2}\right) \right)^2\right]_{xx} - \omega \left[ \frac{4\lambda^4}{9\nu^2} \left( 1 -\cosh\left(\sqrt{\frac{\nu}{\omega}} \frac{x}{2}\right) \right)^2 \right]_{xxxx} \\
&= -\frac{\lambda^4}{6\omega}\cosh\left(\sqrt{\frac{\nu}{\omega}} \frac{x}{2}\right),
\end{align*}
which is the same as the coefficient obtained in \cite{lrpsm2021soliton}.
\item The next step is to determine the value of $\phi_3$. Based on the coefficient formula (\ref{phi_2}), we obtain 
\begin{align*}
\phi_3(x) &=D_t^{\alpha} \left( \mathcal{R} \left[ \psi(x,0)+  D_t^\alpha \psi(x,0) \frac{t^\alpha}{\Gamma(1+\alpha)} \right] \right) \\
&= D_t^{\alpha} \left( \mathcal{R} \left[ \underbrace{\frac{2\lambda^2}{3\nu} \left( 1 -\cosh\left(\sqrt{\frac{\nu}{\omega}} \frac{x}{2}\right) \right)  + 
\frac{\lambda^3}{3\sqrt{\nu \omega}}\sinh \left(\sqrt{\frac{\nu}{\omega}} \frac{x}{2} \right) \frac{t^\alpha}{\Gamma(1+\alpha)}}_{=:\Theta_1} \right] \right)\Bigg|_{t=0}
\\
&=D_t^{\alpha} \left( \nu[\Theta_1^2]_{xx}- \omega[\Theta_1^2]_{xxxx}\right)\Big|_{t=0}.
\end{align*}
Let consider the following terms:
\begin{align*}
 \Theta_1^2 &= \frac{4\lambda^4}{9\nu^2} \left(1- \cosh\left(\sqrt{\frac{\nu}{\omega}} \frac{x}{2}\right) \right)^2 + \frac{4\lambda^5}{9\nu\sqrt{\nu\omega}}\left(1- \cosh\left(\sqrt{\frac{\nu}{\omega}} \frac{x}{2}\right) \right) \sinh\left(\sqrt{\frac{\nu}{\omega}} \frac{x}{2}\right)\frac{t^\alpha}{\Gamma(1+\alpha)}  
\\
&\qquad +\frac{\lambda^6}{9\nu\omega} \sinh\left(\sqrt{\frac{\nu}{\omega}} \frac{x}{2}\right)\frac{t^{2\alpha}}{(\Gamma(1+\alpha))^2}.
\end{align*}
Applying part(3) in Lemma \ref{lem-fd}, we obtain
\begin{align*}
D_t^{\alpha}\left[ \Theta_1^2 \right]_{xx} \Big|_{t=0} &=  \frac{4\lambda^5}{9\nu\sqrt{\nu\omega}}\left[ \left(1- \cosh\left(\sqrt{\frac{\nu}{\omega}} \frac{x}{2}\right) \right) \sinh\left(\sqrt{\frac{\nu}{\omega}} \frac{x}{2}\right) \right]_{xx} 
\\
&= \frac{\lambda^5}{9\omega\sqrt{\nu\omega}}  \left(\sinh\left(\sqrt{\frac{\nu}{\omega}} \frac{x}{2}\right) - 2\sinh\left(\sqrt{\frac{\nu}{\omega}} x\right) \right),
\end{align*}
and 
\begin{align*}
D_t^{\alpha}\left[ \Theta_1^2 \right]_{xxxx} \Big|_{t=0} &=  \frac{4\lambda^5}{9\nu\sqrt{\nu\omega}}\left[ \left(1- \cosh\left(\sqrt{\frac{\nu}{\omega}} \frac{x}{2}\right) \right) \sinh\left(\sqrt{\frac{\nu}{\omega}} \frac{x}{2}\right) \right]_{xxxx} 
\\
&= \frac{\nu\lambda^5}{36\omega^2\sqrt{\nu\omega}} \left(\sinh\left(\sqrt{\frac{\nu}{\omega}} \frac{x}{2}\right) - 8\sinh\left(\sqrt{\frac{\nu}{\omega}} x\right) \right).
\end{align*}
Therefore,
\begin{align*}
\phi_3(x) &= \nu D_t^{\alpha}\left[ \Theta_1^2 \right]_{xx} \Big|_{t=0}-\omega D_t^{\alpha}\left[ \Theta_1^2 \right]_{xxxx} \Big|_{t=0} \\
&= \frac{\lambda^5}{12} \sqrt{\frac{\nu}{\omega^3}}\sinh\left(\sqrt{\frac{\nu}{\omega}} \frac{x}{2}\right),
\end{align*}
which coincides the coefficient given in \cite{lrpsm2021soliton}, but our derivation offers a simpler approach.
\item In a similar manner, to find the coefficients $\phi_k(x)$ for $k=4,5,\dots$, we utilize the coefficient formula (\ref{phi-k-2}) for $m=2$
\begin{align*}
\phi_k(x) &= D_t^{(k-2)\alpha} \left( \mathcal{R} \underbrace{\left[ \sum_{n=0}^{k-2} \phi_n(x)
 \frac{t^{n\alpha}}{\Gamma(1+n\alpha)}\right]}_{=:\Theta_{k-2}}  \right) \;\Bigg|_{t=0} 
 \\
 &=\nu \,D_t^{(k-2)\alpha} \left( [\Theta_{k-2}^2]_{xx} \right) \Big|_{t=0}
 - \omega \,D_t^{(k-2)\alpha} \left( [\Theta_{k-2}^2]_{xxxx} \right) \Big|_{t=0},
\end{align*}
where $\phi_{n}, n=0,1$ is obtained from the initial conditions (\ref{ex-ics}) and  $\phi_{n}, n=2,3,\dots,k-2$ are obtained from the previous calculations.
\\
Consider
\begin{align*}
D_t^{(k-2)\alpha}  \left( \Theta_{k-2}^2 \right) \Big|_{t=0} &= D_t^{(k-2)\alpha}\left( \left( \sum_{n=0}^{k-2} \phi_n(x)
 \frac{t^{n\alpha}}{\Gamma(1+n\alpha)}\right)^2\right)\; \Bigg|_{t=0} 
\\
&= D_t^{(k-2)\alpha}\left( \left( \sum_{m=0}^{k-2}\sum_{n=0}^{k-2} 
 \frac{t^{(n+m)\alpha}}{\Gamma(1+m\alpha)\Gamma(n\alpha)}\phi_{m}(x)\phi_n(x)\right)^2\right)\; \Bigg|_{t=0} 
 \\
&=\sum_{m=0}^{k-2}  \frac{\Gamma(1+(k-2)\alpha)}{\Gamma(1+m\alpha)\Gamma(1+(k-2-m)\alpha)} \phi_m(x)\phi_{k-2-m}(x).
\end{align*}

Therefore, we obtain the formula for the coefficients of this problem: 
\begin{align}
\phi_k(x) &= \nu \sum_{m=0}^{k-2}  \frac{\Gamma(1+(k-2)\alpha)}{\Gamma(1+m\alpha)\Gamma(1+(k-2-m)\alpha)} [\phi_m(x)\phi_{k-2-m}(x)]_{xx} \nonumber \\
 & \quad  \qquad   - \omega \sum_{m=0}^{k-2}  \frac{\Gamma(1+(k-2)\alpha)}{\Gamma(1+m\alpha)\Gamma(1+(k-2-m)\alpha)} [\phi_m(x)\phi_{k-2-m}(x)]_{xxxx},\quad k=4,5,\dots.\label{sosoli}
\end{align}
Maple software will be employed to determine the subsequent coefficients based on the formula (\ref{sosoli}) as show below
\begin{align*}
\phi_4(x) &= -\frac{\lambda^5 \nu}{24\omega^2} \cosh \left( \sqrt{\frac{\nu}{\omega}} \frac{x}{2} \right)
\\
\phi_5(x) &= \frac{\lambda^7}{48}\sqrt{\frac{\nu^3}{\omega^5}} \sinh \left( \sqrt {{\frac {v}{w}}} \frac{x}{2} \right), 
\\
\phi_6(x) &= -\frac{\lambda^8 \nu^2}{96\omega^3} \cosh \left( \sqrt {{\frac {v}{w}}} \frac{x}{2} \right),
\\
\phi_7(x) &= \frac{\lambda^9}{192}\sqrt{\frac{\nu^5}{\omega^7}} \sinh \left( \sqrt {{\frac {v}{w}}} \frac{x}{2} \right)
\end{align*}

These coefficients are in line with the derivation presented in \cite{lrpsm2021soliton} (see Application 4.1, for reference). However, our formula proposes a simplified approach that achieves the same coefficients.
The derivation of the series solution is omitted for brevity, interested readers can find it in (\cite{lrpsm2021soliton}).
\end{itemize}
\end{ex}
\begin{ex}  \label{ex-delay}
Consider the time-fractional generalized Burgers equation with proportional delay \cite{homdelay}
\begin{align} \label{delay}
D_t^{\alpha}u(x,t) &= u_{xx}(x,t) + u_x\left( x, \frac{t}{2}\right)u\left( \frac{x}{2}, \frac{t}{2}\right) + \frac{1}{2}u(x,t),
\end{align}
subject to $u(x,0) = x$, where $0< \alpha \le 1.$
\\
The RHS nonlinear-delay term is $\mathcal{R}[u] = u_{xx}(x,t) + u_x\left( x, \frac{t}{2}\right)u\left( \frac{x}{2}, \frac{t}{2}\right) + \frac{1}{2}u(x,t)$.
\\
Utilizing coefficients formula (\ref{formu}) in Section \ref{subsec4-1}, we obtain
\begin{itemize}
\item $\phi_1(x) = \mathcal{R}[u(x,0)] = \mathcal{R}[x] = x$.
\item $\phi_2(x) = D_t^{\alpha}\left( \mathcal{R} \left[ x + \frac{\phi_1(x)\,t^{\alpha}}{\Gamma(1+\alpha)} \right] \right)\Bigg|_{t=0}$, 
\begin{align*}
\phi_2(x) &= D_t^{\alpha}\left( \mathcal{R} \left[ x + \frac{x\,t^{\alpha}}{\Gamma(1+\alpha)} \right] \right)\Bigg|_{t=0} = D_t^{\alpha}\left( \mathcal{R} \left[ x \left(1 + \frac{t^{\alpha}}{\Gamma(1+\alpha)}\right) \right] \right)\Bigg|_{t=0}
\\
&= D_t^{\alpha}\left(   \frac{x}{2}  \left(1 + \frac{t^{\alpha}}{2^{\alpha}\Gamma(1+\alpha)}\right)^2 +  \frac{x}{2}  \left(1 + \frac{t^{\alpha}}{\Gamma(1+\alpha)}\right) \right)\Bigg|_{t=0}
\\
&=x \left(\frac{1}{2^{\alpha}}+\frac{1}{2} \right).
\end{align*}
For ease of subsequent calculation, we define $\phi_2(x) = c_1(\alpha)$ where $c_1(\alpha) = \left(\frac{1}{2^{\alpha}}+\frac{1}{2} \right).$
\item $\phi_3(x) = D_t^{2\alpha}\left( \mathcal{R} \left[  x + \frac{\phi_1(x)\,t^{\alpha}}{\Gamma(1+\alpha)} +
\frac{\phi_2(x)\,t^{2\alpha}}{\Gamma(1+2\alpha)}   \right] \right)\Bigg|_{t=0}$, 
\begin{align*}
\phi_3(x) 
%&=  D_t^{2\alpha}\left( \Theta_{xx}(x,t) + \Theta_x \left(x,\frac{t}{2} \right)\Theta \left(\frac{x}{2}, \frac{t}{2} \right) + \frac{1}{2} \Theta(x,t)\right)
&= D_t^{2\alpha}\left( \mathcal{R} \left[ x \left( 1 + \frac{t^{\alpha}}{\Gamma(1+\alpha)} + \left(\frac{1}{2^{\alpha}}+\frac{1}{2} \right)\frac{t^{2\alpha}}{\Gamma(1+2\alpha)} \right) \right] \right)\Bigg|_{t=0} 
\\
&= D_t^{2\alpha}\left( \frac{x}{2}\left( 1 + \frac{t^{\alpha}}{2^{\alpha}\Gamma(1+\alpha)} + \left(\frac{1}{2^{\alpha}}+\frac{1}{2} \right)\frac{t^{2\alpha}}{2^{2\alpha}\Gamma(1+2\alpha)} \right)^2 \right.
\\
&\qquad \qquad \qquad \qquad \left.
+ \frac{x}{2} \left( 1 + \frac{t^{\alpha}}{\Gamma(1+\alpha)} + \left(\frac{1}{2^{\alpha}} +\frac{1}{2} \right)\frac{t^{2\alpha}}{\Gamma(1+2\alpha)} \right) \right)\Bigg|_{t=0}
\\
&= D_t^{2\alpha}\left( \frac{x\,t^{2\alpha}}{2^{2\alpha+1}(\Gamma(1+\alpha))^{2}} + x\left(\frac{1}{2^{\alpha}}+\frac{1}{2} \right)\frac{t^{2\alpha}}{2^{2\alpha}\Gamma(1+2\alpha)} +\frac{x}{2}\left(\frac{1}{2^{\alpha}}+\frac{1}{2} \right)\frac{t^{2\alpha}}{\Gamma(1+2\alpha)}  \right)\Bigg|_{t=0}
\\
&= x \left( \frac{\Gamma(1+2\alpha)}{2^{2\alpha+1}(\Gamma(1+\alpha))^{2}} 
+ \left( \frac{1}{2^{2\alpha}}+\frac{1}{2} \right)\left(\frac{1}{2^{\alpha}}+\frac{1}{2} \right) \right).
\end{align*}
To simplify subsequent calculations, let 
\[ \phi_3(x) = x c_2(\alpha),\quad \text{where}\quad c_2(\alpha) = \left( \frac{\Gamma(1+2\alpha)}{2^{2\alpha+1}(\Gamma(1+\alpha))^{2}} 
+ \left( \frac{1}{2^{2\alpha}}+\frac{1}{2} \right)\left(\frac{1}{2^{\alpha}}+\frac{1}{2} \right) \right). \]
\end{itemize} 
The remaining coefficients can be computed in the same manner:
\begin{align*}
\phi_4(x)&= xc_3(\alpha),\;
\text{where}\;  c_3(\alpha) =\left( \frac{1}{2^{3\alpha}}+\frac{1}{2} \right)c_2(\alpha)+ 
\frac{\Gamma(1+3\alpha)}{2^{3\alpha}\Gamma(1+\alpha)\Gamma(1+2\alpha)} c_1(\alpha). 
\\
\phi_5(x) &= xc_4(\alpha),\;
\text{where}\;  c_4(\alpha) =\left( \left( \frac{1}{2^{4\alpha}}+\frac{1}{2} \right)c_3(\alpha)+ 
\frac{\Gamma(1+4\alpha)c_2(\alpha)}{2^{4\alpha}\Gamma(1+\alpha)\Gamma(1+3\alpha)}  + \frac{\Gamma(1+4\alpha)(c_1(\alpha))^2}{2^{4\alpha+1}(\Gamma(1+2\alpha))^2} \right). 
\end{align*}

Therefore, the $5-$th approximate series solution is
\begin{align} \label{apr-sol}
u_5(x,t) &= x\left( 1+ \frac{t^{\alpha}}{\Gamma(1+\alpha)} +c_1(\alpha)\frac{ t^{2\alpha}}{\Gamma(1+2\alpha)} 
+ c_2(\alpha) \frac{ t^{3\alpha}}{\Gamma(1+3\alpha)} + c_3(\alpha) \frac{ t^{4\alpha}}{\Gamma(1+4\alpha)} +  c_4(\alpha) \frac{ t^{5\alpha}}{\Gamma(1+5\alpha)}\right).
\end{align}
\begin{table}[ht!] 
\centering
\begin{tabular}{cccc}
\hline
$x$ & $t$  & HPM  & LRPS method 
\\
& & Abs.error (fourth-order) & Abs. error (fifth approx. series sol.) 
\\ \hline 
$0.25$ & $0.25$ &  $0.000002123$ & $8.88 \times 10^{-8}$ \\
$0.25$ & $0.50$ &  $0.000070943$ & $0.0000058388$ \\
$0.25$ & $0.75$ &  $0.000563483$ & $0.0000690968$ \\
$0.25$ & $1.00$ &  $0.002487123$ & $0.0004037913$ \\
$0.50$ & $0.25$ &  $0.000004245$ & $1.775 \times 10^{-7}$ \\
$0.50$ & $0.50$ &  $0.000141885$ & $0.0000116765$ \\
$0.50$ & $0.75$ &  $0.001126970$ & $0.000138194$ \\
$0.50$ & $1.00$ &  $0.004974250$ & $0.000807587$ \\
$0.75$ & $0.25$ &  $0.000006367$ & $2.662 \times 10^{-7}$  \\
$0.75$ & $0.50$ &  $0.000212830$ & $0.000017512$ \\
$0.75$ & $0.75$ &  $0.001690450$ & $0.000207295$ \\
$0.75$ & $1.00$ &  $0.007461370$ & $0.001211370$ \\
\hline
\end{tabular}
\caption{The absolute error of $4^{th}$-order homotopy perturbation solution \cite{homdelay} and $5^{th}$-approximate LRPS series solution for Example \ref{ex-delay} with $\alpha=1$ \label{tab-delay}}
\end{table}

To validate the accuracy of the LRPS method and the proposed coefficients formula for approximating solutions to Eq. (\ref{delay}), Table \ref{tab-delay} presents a comparison of absolute errors. The table compares the LRPS method with the homotopy perturbation method (HPM) \cite{homdelay} for solutions in the interval $[0, 1]$ at $\alpha=1$. As evident from Table \ref{tab-delay}, the absolute error of the fifth-approximate LRPS approximate solution (Eq. (\ref{apr-sol})) is demonstrably smaller than the errors from the HPM. 
\end{ex}
\section{Conclusion}
This paper presented the Laplace Residual Power Series Method (LRPSM) as a tool for obtaining analytical solutions to fractional differential equations. We demonstrated that LRPS coefficients can be calculated using the concept of the limit at infinity of the residual function with a power function. While both LRPS method and its predecessor, RPS method, require calculating the residual function at each step during the solution process, our work offers a key advantage.

Our method eliminates repetitive calculations of the residual function by deriving a direct formula for all coefficients at once. This formula, similar to the RPS method, involves fractional derivatives. While computing fractional derivatives are considered a disadvantage of the RPS method, our approach offers a key distinction. In our formula, the fractional derivative of order $n\alpha$ is applied to a sum of power functions of order  $k\alpha$, where $n,k \in \mathbb{N}$. This significantly simplifies the computation of these derivatives, making our method much more efficient compared to the traditional LRPS method's repetitive calculations of the residual function.

To demonstrate the efficiency of our method, we applied it to three specific examples:  the time-fractional Kolmogorov equation, the time-fractional Klein-Gordon equation and the time-fractional generalized Burgers equation with proportional delay. In Example \ref{ex-1} and \ref{ex-2}, we were able to derive explicit formulas for the coefficients, avoiding the use of fractional derivatives. This significantly reduces the computational complexity compared. 

%\bibliographystyle{unsrt}  
%\bibliography{RefsLRPSMElsev}

\end{document}